\numberwithin{equation}{section}
\newtheorem*{Theorem*}{Theorem}
\theoremstyle{definition}
\newcommand{\br}{\mathbf{r}} 
\newcommand{\bx}{\mathbf{x}} 
\newcommand{\by}{\mathbf{y}} 
\newcommand{\bo}{\mathbf{0}} 
\newcommand{\ud}{\mathrm{d}}
\pgfplotsset{compat=1.16}
\begin{document}

\renewcommand{\thefootnote}{}

\newcommand{\arXivNumber}{2501.05746}

\renewcommand{\PaperNumber}{019}

\FirstPageHeading

\ShortArticleName{A Minimum Property for Cuboidal Lattice Sums}

\ArticleName{A Minimum Property for Cuboidal Lattice Sums\footnote{This paper is a~contribution to the Special Issue on Basic Hypergeometric Series Associated with Root Systems and Applications in honor of Stephen C.~Milne's 75th birthday. The~full collection is available at \href{https://www.emis.de/journals/SIGMA/Milne.html}{https://www.emis.de/journals/SIGMA/Milne.html}}}

\Author{Shaun COOPER~$^{\rm a}$ and Peter SCHWERDTFEGER~$^{\rm b}$}

\AuthorNameForHeading{S.~Cooper and P.~Schwerdtfeger}

\Address{$^{\rm a)}$~School of Mathematical and Computational Sciences, Massey University Albany,\\
\hphantom{$^{\rm a)}$}~Private Bag 102904, North Shore Mail Centre, Auckland 0745, New Zealand}
\EmailD{\href{mailto:s.cooper@massey.ac.nz}{s.cooper@massey.ac.nz}}

\Address{$^{\rm b)}$~Centre for Theoretical Chemistry and Physics, The New Zealand Institute for Advanced Study\\
\hphantom{$^{\rm b)}$}~(NZIAS), Massey University Albany, Private Bag 102904, North Shore Mail Centre,\\
\hphantom{$^{\rm b)}$}~Auckland 0745, New Zealand}
\EmailD{\href{mailto:peter.schwerdtfeger@gmail.com}{peter.schwerdtfeger@gmail.com}}

\ArticleDates{Received January 13, 2025, in final form March 16, 2025; Published online March 24, 2025}

\Abstract{We analyse a family of lattices considered by Conway and Sloane and show that the corresponding Epstein zeta function attains a local minimum for the body-centred cubic lattice.}

\Keywords{body-centred cubic; face-centred cubic; Epstein zeta function; lattice sum}

\Classification{11E45; 11H31}

\begin{flushright}
\begin{minipage}{58mm}
\it To Stephen Milne on the occasion\\ of his 75th birthday
\end{minipage}
\end{flushright}

\renewcommand{\thefootnote}{\arabic{footnote}}
\setcounter{footnote}{0}

\section{Introduction}
A lattice sum is an expression of the form
\[
\sum_{\bx} F(\bx),
\]
where the vector $\bx$ ranges over a $d$ dimensional lattice. In the case when
\[
F(\bx)=\begin{cases}
\dfrac{1}{|\bx|^s} & \text{if $\bx \neq \bo$}, \\
0 & \text{if $\bx = \bo$},
\end{cases}
\]
the lattice sum is a special case of Epstein's zeta function and has applications in number theory and theoretical chemistry, e.g., see~\cite{BorweinEtAl} for
comprehensive information or~\cite{StickyHardSphere} for some recent applications in chemistry.

Rankin~\cite{Rankin} showed that among two-dimensional lattice sums with discriminant~1, the Epstein zeta function is minimised by
the hexagonal lattice. Further results on minimising lattice sums have been given in~\cite{Betermin,cohn,edelsbrunner,Ennola,faulhuber,Fields,Montgomery,sarnak}.

In this work, we consider a one-parameter continuous family of lattices in three dimensions that includes the face-centred and body-centred cubic lattices, and show that
the Epstein zeta function for the family has a local minimum that is attained by the body-centred cubic lattice.

\section{Cuboidal lattices}
\label{S:2}
Following Conway and Sloane \cite{ConwaySloaneDual}, assume $u$ and $v$ are positive real numbers, let $A=u^2/v^2$, and consider the cuboidal lattice $\Lambda(u,v)$ generated by
\begin{equation}
\label{E:basis1}
\br_1 = (u,v,0),\qquad \br_2(u,0,v) \qquad \text{and}\qquad \br_3=(0,v,v),
\end{equation}
that is,
\begin{equation}
\label{E:basis2}
\Lambda(u,v) = \{c_1\br_1 + c_2\br_2 + c_3 \br_3 \mid c_1,c_2,c_3\in\mathbb{Z}\}.
\end{equation}
The most important cases, ordered by decreasing value of $A$, are
\begin{enumerate}\itemsep=0pt
\item[(1)] $A=1$: the face-centred cubic (fcc) lattice,
\item[(2)] $A=1/\sqrt2$: the mean centred-cuboidal (mcc) lattice,
\item[(3)] $A=1/2$: the body-centred cubic (bcc) lattice, and
\item[(4)] $A=1/3$: the axial centred-cuboidal (acc) lattice.
\end{enumerate}
Thus, $\Lambda(u,v)$ is a continuous family that includes the fcc and bcc lattices which are well known to chemistry students.
The other two lattices are less well known. The mcc lattice is the unique densest three-dimensional isodual lattice~\cite[Theorem 2]{ConwaySloaneDual}, while the
acc lattice is the least dense lattice with kissing number\footnote{See \cite[p. 21]{ConwaySloaneBook}.}~$10$,
e.g., see~\cite{ConwaySloaneDual,Fields2,Patterson}.

In the remainder of this section, we do some standard calculations to find the minimum norm and packing density for the lattice $\Lambda(u,v)$, and use them to define
the normalised lattice and associated Epstein zeta function.
If
\[
\bx = c_1\br_1 + c_2\br_2 + c_3\br_3\in\Lambda(u,v),
\]
then
\begin{align*}
\|\bx\|^2 = (c_1\br_1 + c_2\br_2 + c_3\br_3)\cdot(c_1\br_1 + c_2\br_2 + c_3\br_3)
 = (c_1,c_2,c_3) G (c_1,c_2,c_3)^{\sf T},
\end{align*}
where $G$ is the Gram matrix whose $(i,j)$ entry is the dot product $\br_i \cdot \br_j$, that is,
\begin{equation}
\label{E:Gram}
G=\begin{bmatrix}
 u^2+v^2 & u^2 & v^2 \\
 u^2 & u^2+v^2 & v^2 \\
 v^2 & v^2 & 2v^2
\end{bmatrix}
=v^2 \begin{bmatrix}
 A+1 & A & 1 \\
 A & A+1 & 1 \\
 1 & 1 & 2
\end{bmatrix}.
\end{equation}
The minimum norm of a lattice $\Lambda$ is \cite[p.~42]{ConwaySloaneBook}
\[
\mu = \text{min} \bigl\{ \| \bx -\by\|^2 \mid \bx,\by \in \Lambda, \; \bx \neq \by\bigr\} =
\text{min} \bigl\{ \| \bx \|^2 \mid \bx \in \Lambda, \; \bx \neq \bo \bigr\}.
\]
For the lattice $\Lambda(u,v)$, we have
\begin{align*}
\mu
&{}= \min \bigl\{ \| \bx \|^2 \mid \bx \in \Lambda(u,v), \; \bx \neq \bo \bigr\} \nonumber \\
&{}= \min \bigl\{ (c_1,c_2,c_3) G (c_1,c_2,c_3)^{\sf T}\mid c_1,c_2,c_3\in\mathbb{Z},\; (c_1,c_2,c_3)\neq(0,0,0) \bigr\} \nonumber \\
&{}=\min_{(c_1,c_2,c_3)\in\mathbb{Z}^3 \atop (c_1,c_2,c_3)\neq \bo\;} v^2 \bigl((A+1)c_1^2+(A+1)c_2^2+2c_3^2+2Ac_1c_2+2c_1c_3+2c_2c_3\bigr) \nonumber \\
&{}=\min_{(c_1,c_2,c_3)\in\mathbb{Z}^3 \atop (c_1,c_2,c_3)\neq \bo\;} v^2 \bigl( A(c_1+c_2)^2+(c_2+c_3)^2+(c_1+c_3)^2\bigr) \nonumber
\end{align*}
and it follows that
\begin{align}
\mu
&= \begin{cases}
4Av^2 & \text{if ~$0<A<1/3$,} \\
(A+1)v^2 & \text{if ~$1/3\leq A \leq 1$,} \\
2v^2 & \text{if ~$A>1$.}
\end{cases} \label{E:mu}
\end{align}
The packing radius of a lattice is defined by $p = \frac12 \sqrt{\mu}$ and the packing density is~\cite[pp.~1--7]{ConwaySloaneBook}
\[
\Delta = \frac{\text{volume of one sphere of radius $p$}}{(\det G)^{1/2}}.
\]
From~\eqref{E:Gram}, we have $\det G = 4Av^6$, hence the packing density of $\Lambda(u,v)$ is given by
\begin{equation}
\Delta =
\begin{cases}
\dfrac{2\pi A}{3} & \text{if~$0<A<1/3$,} \vspace{1.5mm}\\
\dfrac{\pi}{12}\sqrt{\dfrac{(A+1)^3}{A}} & \text{if~$1/3\leq A \leq 1$,} \vspace{1.5mm}\\
\dfrac{\pi}{6}\sqrt{\frac{2}{A}} & \text{if~$A>1$}.
\end{cases}
\label{E:density}
\end{equation}
The maximum packing
density for this family of lattices is~$\frac{\pi\sqrt{2}}{6} \approx 74\%$ for the fcc lattice at~${A=1}$. This was conjectured by Kepler and proved by Hales~\cite{Hales} to be maximal among all packings in three dimensions.
Since
\[
\frac{\ud}{\ud A} \left(\frac{\pi}{12}\sqrt{\frac{(A+1)^3}{A}}\right) = \frac{\pi}{24}\left(\frac{A+1}{A^3}\right)^{1/2}(2A-1),
\]
the packing density has a local minimum at $A=1/2$, corresponding to bcc.
Some special values of the density, together with the kissing numbers, are given in Table~\ref{T:1}.
A~graph of the packing density as a function of~$A$ is shown in Figure~\ref{F:graph}.

\begin{table}[h]\centering
\caption{Packing density and kissing number as functions of $A$.
Here $(a,b)$ means $\left\{A \mid a<A<b\right\}$.}

\vspace{1mm}

\renewcommand{\arraystretch}{1.4}
\begin{tabular}{|c|c|c|c|c|c|c|c|c|c|}
\hline
$A$ & $(0,\frac13)$ & $\frac13$ & $(\frac13,\frac12)$ & $\frac12$ & $(\frac12,\frac1{\sqrt2})$ & $\frac1{\sqrt2}$ & $(\frac1{\sqrt2},1)$ & 1 & $(1,\infty)$ \\[2pt]
\hline
name & & acc & & bcc & & mcc & & fcc & \\
\hline
\rule{0pt}{4ex}
 density & $\frac{2\pi A}{3}$ & $\frac{2\pi}{9}$ & $\frac{\pi}{12}\sqrt{\frac{(A+1)^3}{A}}$ & $\frac{\pi\sqrt{3}}{8}$ & \multicolumn{3}{c|}{$\frac{\pi}{12}\sqrt{\frac{(A+1)^3}{A}}$} & $\frac{\pi\sqrt{2}}{6}$ & $\frac{\pi}{6}\sqrt{\frac{2}{A}}$ \\[5pt]
\hline
kiss. no. & 2 & 10 & \multicolumn{5}{c|}{8} & 12 & 4 \\
\hline
\end{tabular}
\label{T:1}
\end{table}

\begin{figure}[t]
\centering
\begin{tikzpicture}[thin,scale=5]

 \draw [black,thick,domain=0:0.334] plot ({\x}, {2*3.142*\x/3});
 \draw [black,thick,domain=0.334:1] plot ({\x}, {3.142*(\x+1)^(3/2)/(12*\x^(1/2))});
 \draw [black,thick,domain=1:1.5] plot ({\x}, {3.142/6*(2/\x)^(1/2)});
 \path[draw][-Stealth] (0,0) -- (1.5,0);
 \path[draw][-Stealth] (0,0) -- (0,1.1);

 \draw (1.5,0) node[below right] {$A$};
 \draw (0,1.1) node[left] {$\Delta$};

 \draw (1,-0.02) node[below] {$1$};
 \draw (1/2,0) node[below] {$\frac12$};
 \draw (1/3,0) node[below] {$\frac13$};

 \draw (1,-0.22) node[above] {fcc};
 \draw (1/2,-0.22) node[above] {bcc};
 \draw (1/3,-0.22) node[above] {acc};

 \draw (-0.12,0.82) node[left] {$\pi\sqrt{2}/6 \approx 0.740$};
 \draw (-0.12,0.72) node[left] {$2\pi/9 \approx 0.698$};
 \draw (-0.12,0.62) node[left] {$\pi\sqrt{3}/8 \approx 0.680$};

 \draw[dashed] (1,0) -- (1,3.1416*1.41421/6);
 \draw[dashed] (0,3.1416*1.41421/6) -- (1,3.1416*1.41421/6);
 \draw[dashed] (0,3.14159*1.732/8) -- (1/2,3.14159*1.732/8);
 \draw[dashed] (1/2,0) -- (1/2,3.14159*1.732/8);
 \draw[dashed] (0,2*3.14159/9) -- (1/3,2*3.14159/9);
 \draw[dashed] (1/3,0) -- (1/3,2*3.14159/9);

 \path[draw][dashed, very thin] (-0.12,0.79) -- (0,0.740);
 \path[draw][dashed, very thin] (-0.12,0.72) -- (0,0.698);
 \path[draw][dashed, very thin] (-0.12,0.64) -- (0,0.680);

\fill[black] (1,3.1416*1.41421/6) circle(0.01);
\fill[black] (1/2,3.14159*1.732/8) circle(0.01);
\fill[black] (1/3,2*3.14159/9) circle(0.01);
\fill[black] (0,0) circle(0.01);

\end{tikzpicture}
\caption{Graph of $\Delta$ as a function of $A$ given by~\eqref{E:density}.}
\label{F:graph}
\end{figure}
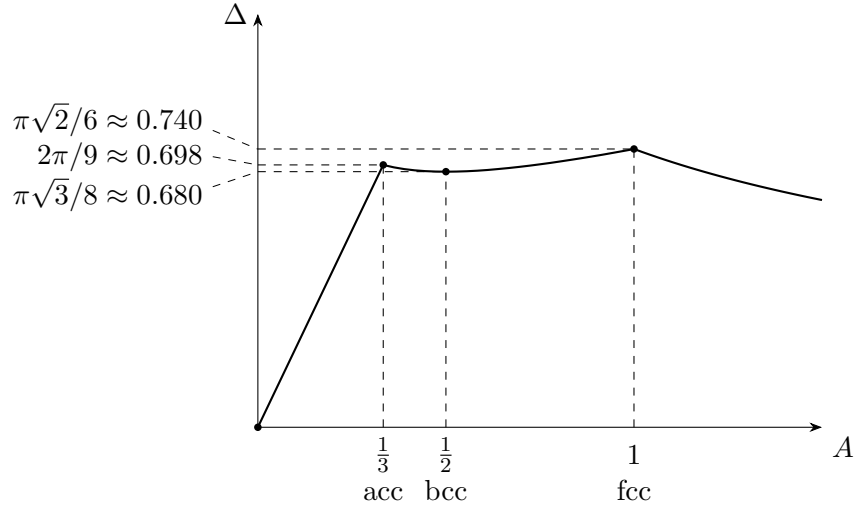

In order to study physical properties of the lattice, we normalise so that the minimum distance is~$\mu=1$.
Accordingly, from~\eqref{E:mu} let
\begin{equation}
v = \begin{cases}
\dfrac{1}{2\sqrt{A}} & \text{if~$0<A<1/3$,} \vspace{1mm}\\
\dfrac{1}{\sqrt{A+1}} & \text{if~$1/3\leq A \leq 1$,} \vspace{1mm}\\
\dfrac{1}{\sqrt{2}} & \text{if~$A>1$.}
\end{cases}
\label{E:normal}
\end{equation}
The cases $0<A<1/3$ and $A>1$ are of less interest because the normalised lattice degenerates to a lattice congruent to $\mathbb{Z}$ or $\mathbb{Z}^2$
in the limits $A\rightarrow 0$ and $A\rightarrow\infty$, respectively. From now on, we assume $1/3 \leq A \leq 1$. From~\eqref{E:Gram} and~\eqref{E:normal}, the Gram matrix for the normalised lattice~is\looseness=1
\[
G=\frac{1}{A+1}\begin{bmatrix}
 A+1 & A & 1 \\
 A & A+1 & 1 \\
 1 & 1 & 2
\end{bmatrix}.
\]
The associated quadratic form is
\begin{align*}
g(A;i,j,k) &= (i,j,k) G (i,j,k)^{\sf T} \\
&=\frac{1}{A+1}\bigl((A+1)i^2+(A+1)j^2+2k^2+2Aij+2ik+2jk\bigr) \\
&=\frac{1}{A+1}\bigl(A(i+j)^2+(j+k)^2+(i+k)^2\bigr)
\end{align*}
and the corresponding Epstein zeta function is
\begin{equation}
\label{E:Epstein}
L(A;s) = {\sum_{i,j,k\in\mathbb{Z}}\!\!\!}^{\prime}\biggl(\frac{1}{g(A;i,j,k)}\biggr)^s = {\sum_{i,j,k\in\mathbb{Z}}\!\!\!}^{\prime} \biggl(\frac{A+1}{A(i+j)^2+(j+k)^2+(i+k)^2}\biggr)^s,
\end{equation}
where the primes indicate that the term corresponding to $(i,j,k)=(0,0,0)$ is omitted from the summations. The series~\eqref{E:Epstein} converges for $s>3/2$ and we
further assume ${1/3\leq A \leq 1}$.

\section{A minimum property}
Numerical calculations in~\cite{StickyHardSphere} suggest that for a fixed value of~$s>3/2$, the Epstein zeta function~\eqref{E:Epstein}
appears to have a local minimum value at $A=1/2$. This is confirmed by the following~result.
\begin{theorem}
\label{theorem3point1}
Suppose $s>3/2$.
The Epstein zeta function $L(A;s)$ defined by~\eqref{E:Epstein} satisfies
\[
 \frac{\partial}{\partial A} L(A;s) \biggr|_{A=1/2} =0
\qquad \text{and}\qquad \frac{\partial^2}{\partial A^2} L(A;s) \biggr|_{A=1/2} >0.
\]
\end{theorem}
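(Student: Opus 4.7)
The plan is to change variables so that a hidden involutive symmetry of the bcc lattice becomes visible. Set $a=i+j$, $b=j+k$, $c=i+k$, giving a bijection of $\mathbb{Z}^3$ onto the sublattice $\mathcal{L}=\{(a,b,c)\in\mathbb{Z}^3:a+b+c\equiv 0\pmod 2\}$, under which
\[
L(A;s)=(A+1)^s\sum_{(a,b,c)\in\mathcal{L}\setminus\{\bo\}}\frac{1}{(Aa^2+b^2+c^2)^s}.
\]
Write $D=a^2+2b^2+2c^2$ (so that the denominator at $A=1/2$ equals $D/2$) and let ${\sum}'$ abbreviate $\sum_{(a,b,c)\in\mathcal{L}\setminus\{\bo\}}$ throughout. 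Differentiating term by term (legal for $s>3/2$ by absolute convergence) and routine algebra reduce the theorem to the two claims $\Sigma_0=3\Sigma_1$ and $X>2P$, where
\[
\Sigma_0={\sum}'\,D^{-s},\qquad \Sigma_1={\sum}'\,a^2 D^{-s-1},\qquad X={\sum}'\,a^4D^{-s-2},\qquad P={\sum}'\,a^2b^2D^{-s-2}.
\]

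The central tool is the involution
\[
\phi(a,b,c)=\left(b+c,\,\tfrac{a+b-c}{2},\,\tfrac{a-b+c}{2}\right)
\]
of $\mathcal{L}$. One checks immediately that $\phi^2=\mathrm{id}$, that $\phi$ maps $\mathcal{L}$ to itself (integrality holds because $a+b-c\equiv a+b+c\pmod 2$), and that $\phi$ preserves the form $D$. Combined with the obvious sign-flip and swap symmetries $a\mapsto -a$, $b\mapsto -b$, $b\leftrightarrow c$ (all preserving $\mathcal{L}$ and $D$), applying $\phi$ to $\Sigma_1$ yields $\Sigma_1={\sum}'(b+c)^2/D^{s+1}=2{\sum}'b^2/D^{s+1}$: the $2bc$ cross-term vanishes under $b\mapsto -b$, and the $b^2,c^2$ sums coincide under $b\leftrightarrow c$. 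Writing $\Sigma_0={\sum}'D\cdot D^{-s-1}$ and expanding then gives $\Sigma_0=\Sigma_1+4{\sum}'b^2/D^{s+1}=3\Sigma_1$, which establishes the first equation of the theorem.

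The same recipe handles the second derivative. Applying $\phi$ to $X$ yields $X={\sum}'(b+c)^4/D^{s+2}$; after expansion, the odd monomials $b^3c$ and $bc^3$ die under the sign-flips, leaving $X=2Y+6Q$, where $Y={\sum}'b^4/D^{s+2}$ and $Q={\sum}'b^2c^2/D^{s+2}$. Applying $\phi$ to $P$ gives $P=\tfrac14{\sum}'(b+c)^2(a+b-c)^2/D^{s+2}$; expanding $(a+b-c)^2$, the term linear in $a$ vanishes under $a\mapsto -a$, and the remainder simplifies to $P=\tfrac14(2P+2Y-2Q)$, i.e.\ $P=Y-Q$. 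Combining, $X-2P=(2Y+6Q)-2(Y-Q)=8Q$, which is strictly positive because the single point $(0,1,1)\in\mathcal{L}$ already contributes a positive term to $Q$. This yields the second assertion.

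The main obstacle is locating $\phi$: it is not a symmetry of the problem in the naive coordinates and only becomes a form-preserving involution at the special value $A=1/2$, where the coefficient of $a^2$ in $D$ is exactly half those of $b^2$ and $c^2$. Once $\phi$ is in hand, the remainder is careful parity bookkeeping, and the positivity of the second derivative emerges naturally as a positive multiple of the manifestly positive sum $Q$.
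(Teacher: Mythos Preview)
Your proof is correct. The change of variables to the even sublattice $\mathcal{L}$, the involution $\phi$, and the subsequent parity/swap bookkeeping all check out; in particular the reduction of $\partial^2 L/\partial A^2>0$ to $X>2P$ is right (since $\Sigma_1=X+4P$, the second derivative at $A=1/2$ equals a positive constant times $3X-\Sigma_1=2(X-2P)$), and your identities $\Sigma_0=3\Sigma_1$ and $X-2P=8Q$ follow exactly as you say.

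Your route is genuinely different from the paper's. The paper makes a change of variables $(I,J,K)\mapsto(i,j,k)$ under which, at $A=1/2$, the quadratic form in the denominator becomes fully symmetric in $i,j,k$. The first-derivative numerator is then $ij+ik-2jk$, and averaging over the three transpositions of $\{i,j,k\}$ kills it in one line; the second derivative is directly a positive constant times $\sum'(jk+ik-2ij)^2/(\cdots)^{s+2}$, a manifest sum of squares. In your coordinates only the $b\leftrightarrow c$ and sign-flip symmetries are visible, so you have to \emph{discover} the extra involution $\phi$ and do more algebra with the auxiliary sums $Y,Q,P$. The paper's approach is shorter because it makes the full $S_3$ symmetry of bcc explicit from the start; yours trades that for a sharper closed-form identity $X-2P=8Q$, expressing the second derivative as a positive multiple of the single transparent sum $Q=\sum' b^2c^2/D^{s+2}$.
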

\begin{proof}
By definition, we have
\[
L(A;s) = {\sum_{I,J,K\in\mathbb{Z}}\!\!\!}^{\prime} \biggl( \frac{1}{g(A;I,J,K)}\biggr)^s,
\]
where
\[
g(A;I,J,K) = \frac{1}{A+1}\bigl(A(I+J)^2+(J+K)^2+(I+K)^2\bigr).
\]
Now make the change of variables $(I,J,K) = (i-j,-k,j)$. This is a bijection since $(i,j,k) = (I+K,K,-J)$, and it follows that
\begin{align*}
L(A;s) &= {\sum_{i,j,k\in\mathbb{Z}}\!\!\!}^{\prime} \biggl( \frac{1}{g(A;i-j,-k,j)}\biggr)^s \\
&={\sum_{i,j,k\in\mathbb{Z}}\!\!\!}^{\prime}~ \frac{1}{ \bigl(i^2+j^2+k^2-2(ij+ik)\bigl(\frac{A}{A+1}\bigr)+2jk\bigl(\frac{A-1}{A+1}\bigr)\bigr)^s}.
\end{align*}
By direct calculation, the derivative is given by
\begin{equation}
\label{deriv0}
\frac{\partial}{\partial A} L(A;s) =\frac{2s}{(A+1)^2}
{\sum_{i,j,k\in\mathbb{Z}}\!\!\!}^{\prime}~ \frac{ij+ik-2jk}{ \bigl(i^2+j^2+k^2-2(ij+ik)\bigl(\frac{A}{A+1}\bigr)+2jk\bigl(\frac{A-1}{A+1}\bigr)\bigr)^{s+1}}.
\end{equation}
Setting $A=1/2$ gives
\begin{equation}
\label{deriv1}
\frac{\partial}{\partial A} L(A;s)\biggr|_{A=1/2}
 =\frac{8s}{9}\;
{\sum_{i,j,k\in\mathbb{Z}}\!\!\!}^{\prime}~ \frac{ij+ik-2jk}{ \bigl(i^2+j^2+k^2-\frac23(ij+ik+jk)\bigr)^{s+1}}.
\end{equation}
Switching $i$ and $j$ gives
\begin{equation}
\label{deriv2}
\frac{\partial}{\partial A} L(A;s)\bigr|_{A=1/2}
 =\frac{8s}{9}\;
{\sum_{i,j,k\in\mathbb{Z}}\!\!\!}^{\prime}~ \frac{ij+jk-2ik}{ \bigl(i^2+j^2+k^2-\frac23(ij+ik+jk)\bigr)^{s+1}},
\end{equation}
while switching $i$ and $k$ in~\eqref{deriv1} gives
\begin{equation}
\label{deriv3}
\frac{\partial}{\partial A} L(A;s)\biggr|_{A=1/2}
 =\frac{8s}{9}\;
{\sum_{i,j,k\in\mathbb{Z}}\!\!\!}^{\prime}~ \frac{jk+ik-2ij}{ \bigl(i^2+j^2+k^2-\frac23(ij+ik+jk)\bigr)^{s+1}}.
\end{equation}
On adding \eqref{deriv1}--\eqref{deriv3} and noting that
\[(ij+ik-2jk) + (ij+jk-2ik) + (jk+ik-2ij) = 0,\]
it follows that
\[
\frac{\partial}{\partial A} L(A;s)\biggr|_{A=1/2} = 0.
\]
Next, taking the derivative of~\eqref{deriv0} gives
\begin{align*}
\frac{\partial^2}{\partial A^2} L(A;s)
&=\frac{-4s}{(A+1)^3}
{\sum_{i,j,k\in\mathbb{Z}}\!\!\!}^{\prime}~ \frac{ij+ik-2jk}{ \bigl(i^2+j^2+k^2-2(ij+ik)\bigl(\frac{A}{A+1}\bigr)+2jk\bigl(\frac{A-1}{A+1}\bigr)\bigr)^{s+1}} \\
\nonumber
&\quad + \frac{4s(s+1)}{(A+1)^4}
{\sum_{i,j,k\in\mathbb{Z}}\!\!\!}^{\prime}~ \frac{(ij+ik-2jk)^2}{ \bigl(i^2+j^2+k^2-2(ij+ik)\bigl(\frac{A}{A+1}\bigr)+2jk\bigl(\frac{A-1}{A+1}\bigr)\bigr)^{s+2}}.
\end{align*}
When $A=1/2$ the first sum is zero by the calculations in the first part of the proof. Therefore,
\begin{align*}
\frac{\partial^2}{\partial A^2} L(A;s) \biggr|_{A=1/2}
&= \frac{64s(s+1)}{81}
{\sum_{i,j,k\in\mathbb{Z}}\!\!\!}^{\prime}~ \frac{(jk+ik-2ij)^2}{ \bigl(i^2+j^2+k^2-\frac23(ij+ik+jk)\bigr)^{s+2}}.
\end{align*}
The term $(jk+ik-2ij)^2$ in the numerator is non-negative and not always zero. The
denominator is always positive because the quadratic form is positive definite. It follows that
\begin{align*}
\frac{\partial^2}{\partial A^2} L(A;s) \biggr|_{A=1/2} >0
\end{align*}
as required.

The calculations above are valid provided term-by-term differentiation of the series is allowed. All of the series encountered above
converge absolutely and uniformly on compact subsets of the region ${\rm Re}(s)>3/2$. On restricting~$s$ to real values, the conclusion
about positivity is valid for $s>3/2$.
\end{proof}

A consequence of Theorem~\ref{theorem3point1} is that for any fixed value $s>3/2$, the lattice
sum $L(A;s)$ attains a local minimum when~\mbox{$A=1/2$.}
The values $s=6$ and $s=3$ are used in the classical Lennard--Jones 12-6 potential, so the values $s>3/2$ are sufficient for physical applications.
Some graphs of $y=L(A;s)$ for $1/3\leq A \leq 1$ for various values of $s$ are given in Figure~\ref{F:graph2}.

\begin{figure}[t]
\centering
\begin{tikzpicture}[font=\small]
\begin{axis}
	\addplot[smooth,color=black,width=5cm, height=5cm] coordinates { 
 (0.33333, 10.00118)
 (0.35000, 8.967912)
 (0.36667, 8.489110)
 (0.38333, 8.25768)
 (0.40000, 8.141609)
 (0.416667, 8.081563)
 (0.43333, 8.049812)
 (0.46667, 8.0241740187)
 (0.50000, 8.0190273087)
 (0.53333, 8.0223952018)
 (0.56667, 8.03150688333)
 (0.60000, 8.04671837476)
 (0.63333, 8.06996385681)
 (0.66667, 8.104501758353)
 (0.70000, 8.155130930928)
 (0.73333, 8.228667598687)
 (0.76667, 8.334653114417)
 (0.80000, 8.486327201300)
 (0.83333, 8.7019353759012)
 (0.86667, 9.0064670553065)
 (0.90000, 9.4339506868159)
 (0.93333, 10.0304677378558)
 (0.96667, 10.8580909158806)
 (1, 12.0000057289405)
	};	

 \addplot[smooth,color=black,width=5cm, height=5cm] coordinates { 
 (0.33333333333333333333, 13.040622444562912403)
 (0.36666666666666666667, 12.694606628106355723)
 (0.40000000000000000000, 12.474552160963319153)
 (0.43333333333333333333, 12.342313594377939027)
 (0.46666666666666666667, 12.273908616045218811)
 (0.50000000000000000000, 12.253667867292322831)
 (0.53333333333333333333, 12.271016396284908976)
 (0.56666666666666666667, 12.318618566153222303)
 (0.60000000000000000000, 12.391265259106870626)
 (0.63333333333333333333, 12.485182814889144332)
 (0.66666666666666666667, 12.597590639621692929)
 (0.70000000000000000000, 12.726410301466212736)
 (0.73333333333333333333, 12.870069615379811433)
 (0.76666666666666666667, 13.027367845722104532)
 (0.80000000000000000000, 13.197381154105495280)
 (0.83333333333333333333, 13.379395107866104248)
 (0.86666666666666666667, 13.572855732323528115)
 (0.90000000000000000000, 13.777333492168977263)
 (0.93333333333333333333, 13.992496431159469827)
 (0.96666666666666666667, 14.218089894126007104)
 (1., 14.453921043744471864)
 };

 \addplot[smooth,color=black,width=5cm, height=5cm] coordinates {	
 (0.33333333333333333333, 10.4079223304717222)
 (0.36666666666666666667, 9.7688437104513075)
 (0.40000000000000000000, 9.4176354059832301)
 (0.43333333333333333333, 9.2290396186405849)
 (0.46666666666666666667, 9.13929450239744972)
 (0.50000000000000000000, 9.11418326807535893)
 (0.53333333333333333333, 9.13459977678913771)
 (0.56666666666666666667, 9.18959607989457434)
 (0.60000000000000000000, 9.27284557888031306)
 (0.63333333333333333333, 9.38075698958812637)
 (0.66666666666666666667, 9.51142662974143924)
 (0.70000000000000000000, 9.66403529468611753)
 (0.73333333333333333333, 9.83849012381200527)
 (0.76666666666666666667, 10.03520615867881889)
 (0.80000000000000000000, 10.25497003062246171)
 (0.83333333333333333333, 10.49885329059140115)
 (0.86666666666666666667, 10.768156509951265180)
 (0.90000000000000000000, 11.064372899961851251)
 (0.93333333333333333333, 11.389164578796850756)
 (0.96666666666666666667, 11.744347197808978519)
 (1., 12.131880196544579717)
 };

 \addplot[smooth,color=black,width=5cm, height=5cm] coordinates {
 (1/3,8) (1,8)};

 \addplot+[only marks,mark=*,mark options={scale=0.5, fill=white},text mark as node=true,color=black] coordinates {
 (1/3,8)
 (1,8)
};

 \addplot+[only marks, mark=*, mark options={scale=0.5,color=black}] coordinates {
 (1/3,10)
 (1,12)
};

	\end{axis}
\end{tikzpicture}
\caption{Graphs of $y=L(A;s)$ for $1/3\leq A \leq 1$ given by~\eqref{E:Epstein} for (from top to bottom)
 $s=3$, ${s=6}$, ${s=20}$ and $s=\infty$.
 In the limiting case $s\rightarrow\infty$ we have
 $L(A;\infty)=\;$kiss$(A)$ where kiss$(A)$ is the kissing number as a function of $A$ as given in Table~\ref{T:1}.}
\label{F:graph2}
\end{figure}
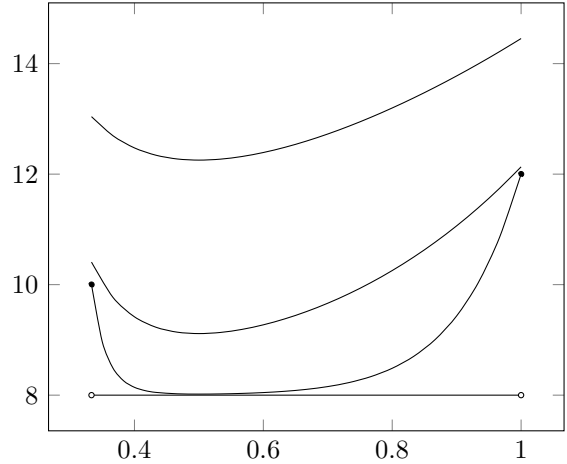

\appendix

\section[The limiting cases A -> 0 and A -> infty]{The limiting cases $\boldsymbol{A\rightarrow 0}$ and $\boldsymbol{A\rightarrow\infty}$}
In Section~\ref{S:2}, it was stated that the normalised lattices are congruent to $\mathbb{Z}$ or $\mathbb{Z}^2$
in the limits~${A\rightarrow 0}$ and $A\rightarrow\infty$, respectively.
Here we provide an analysis for the case $A\rightarrow\infty$. When $A>1$, by~\eqref{E:basis1}, \eqref{E:basis2} and~\eqref{E:normal} the normalised lattice is
\[
\Lambda= \biggl\{c_1 \biggl(\sqrt\frac{A}{2},\frac{1}{\sqrt2},0\biggr) + c_2 \biggl(\sqrt\frac{A}{2},0,\frac{1}{\sqrt2}\biggr)+c_3 \biggl(0,\frac{1}{\sqrt2},\frac{1}{\sqrt2}\biggr) \mid c_1,c_2,c_3\in\mathbb{Z}\biggr\}.
\]
In the limit $A\rightarrow\infty$, the only vectors that remain finite are those with $c_2=-c_1$, hence the limiting set is given by
\[
\Lambda_\infty := \lim_{A\rightarrow\infty}\Lambda = \biggl\{c_1 \biggl(0,\frac{1}{\sqrt2},\frac{-1}{\sqrt2}\biggr)+c_3 \biggl(0,\frac{1}{\sqrt2},\frac{1}{\sqrt2}\biggr) \mid c_1,c_3\in\mathbb{Z}\biggr\}.
\]
The function $\phi\colon \Lambda_\infty \rightarrow \mathbb{Z}^2$ given by
\[
\phi\biggl(c_1 \biggl(0,\frac{1}{\sqrt2},\frac{-1}{\sqrt2}\biggr)+c_3 \biggl(0,\frac{1}{\sqrt2},\frac{1}{\sqrt2}\biggr)\biggr) = (c_1,c_3)
\]
is an isometry, i.e., it preserves distance, hence the normalised lattice in the limiting case $A\rightarrow\infty$ is congruent to $\mathbb{Z}^2$.
The other limiting case $A\rightarrow0$ may be analysed in a similar way by considering the normalised lattice in the case $0<A<1/3$. We omit the details as they are similar.


\begin{thebibliography}{99}
\footnotesize\itemsep=0pt

\bibitem{Betermin}
B\'etermin L., Optimality of the triangular lattice for {L}ennard--{J}ones type
 lattice energies: a~computer-assisted method, \href{https://doi.org/10.1088/1751-8121/acc21d}{\textit{J.~Phys.~A}} \textbf{56}
 (2023), 145204, 19~pages, \href{https://arxiv.org/abs/2104.09795}{arXiv:2104.09795}.

\bibitem{BorweinEtAl}
Borwein J.M., Glasser M.L., McPhedran R.C., Wan J.G., Zucker I.J., Lattice sums
 then and now, \textit{Encyclopedia Math. Appl.}, Vol.~150, \href{https://doi.org/10.1017/CBO9781139626804}{Cambridge
 University Press}, Cambridge, 2013.

\bibitem{StickyHardSphere}
Burrows A., Cooper S., Schwerdtfeger P., Instability of the body-centered cubic
 lattice within the sticky hard sphere and {L}ennard--{J}ones model obtained
 from exact lattice summations, \href{https://doi.org/10.1103/PhysRevE.104.035306}{\textit{Phys. Rev.~E}} \textbf{104} (2021),
 035306, 10~pages, \href{https://arxiv.org/abs/2107.11380}{arXiv:2107.11380}.

\bibitem{cohn}
Cohn H., Kumar A., Miller S.D., Radchenko D., Viazovska M., Universal
 optimality of the~{$E_8$} and {L}eech lattices and interpolation formulas,
 \href{https://doi.org/10.4007/annals.2022.196.3.3}{\textit{Ann. of Math.}} \textbf{196} (2022), 983--1082, \href{https://arxiv.org/abs/1902.05438}{arXiv:1902.05438}.

\bibitem{ConwaySloaneDual}
Conway J.H., Sloane N.J.A., On lattices equivalent to their duals,
 \href{https://doi.org/10.1006/jnth.1994.1073}{\textit{J.~Number Theory}} \textbf{48} (1994), 373--382.

\bibitem{ConwaySloaneBook}
Conway J.H., Sloane N.J.A., Sphere packings, lattices and groups, 3rd ed.,
 \textit{Grundlehren Math. Wiss.}, Vol.~290, \href{https://doi.org/10.1007/978-1-4757-6568-7}{Springer}, New York,
 1999.

\bibitem{edelsbrunner}
Edelsbrunner H., Kerber M., Covering and packing with spheres by diagonal
 distortion in~{$\mathbb{R}^n$}, in Rainbow of Computer Science,
 \textit{Lecture Notes in Comput. Sci.}, Vol.~6570, \href{https://doi.org/10.1007/978-3-642-19391-0_2}{Springer}, Heidelberg,
 2011, 20--35.

\bibitem{Ennola}
Ennola V., On a~problem about the {E}pstein zeta-function, \href{https://doi.org/10.1017/S0305004100038330}{\textit{Proc.
 Cambridge Philos. Soc.}} \textbf{60} (1964), 855--875.

\bibitem{faulhuber}
Faulhuber M., Steinerberger S., An extremal property of the hexagonal lattice,
 \href{https://doi.org/10.1007/s10955-019-02368-3}{\textit{J.~Stat. Phys.}} \textbf{177} (2019), 285--298, \href{https://arxiv.org/abs/1903.06856}{arXiv:1903.06856}.

\bibitem{Fields2}
Fields K.L., The fragile lattice packings of spheres in three-dimensional
 space, \href{https://doi.org/10.1107/S0567739480000411}{\textit{Acta Cryst. Sect.~A}} \textbf{36} (1980), 194--197.

\bibitem{Fields}
Fields K.L., Locally minimal {E}pstein zeta functions, \href{https://doi.org/10.1112/S002557930000989X}{\textit{Mathematika}}
 \textbf{27} (1980), 17--24.

\bibitem{Hales}
Hales T.C., A~proof of the {K}epler conjecture, \href{https://doi.org/10.4007/annals.2005.162.1065}{\textit{Ann. of Math.}}
 \textbf{162} (2005), 1065--1185.

\bibitem{Montgomery}
Montgomery H.L., Minimal theta functions, \href{https://doi.org/10.1017/S0017089500007047}{\textit{Glasgow Math.~J.}} \textbf{30}
 (1988), 75--85.

\bibitem{Patterson}
Patterson A., Crystal lattice models based on the close packing of spheres,
 \href{https://doi.org/10.1063/1.1769865}{\textit{Rev. Sci. Instrum.}} \textbf{12} (1941), 206--211.

\bibitem{Rankin}
Rankin R.A., A~minimum problem for the {E}pstein zeta-function, \href{https://doi.org/10.1017/S2040618500035668}{\textit{Proc.
 Glasgow Math. Assoc.}} \textbf{1} (1953), 149--158.

\bibitem{sarnak}
Sarnak P., Str\"ombergsson A., Minima of {E}pstein's zeta function and heights
 of flat tori, \href{https://doi.org/10.1007/s00222-005-0488-2}{\textit{Invent. Math.}} \textbf{165} (2006), 115--151.

\end{thebibliography}

\pdfbookmark[1]{References}{ref}
\LastPageEnding

\end{document}